\theoremstyle{definition}
\newtheorem{definition}{Definition}[section]
\newtheorem{theorem}[definition]{Theorem}
\newtheorem*{theorem*}{Conjecture}
\newtheorem{lemma}[definition]{Lemma}
\newtheorem{corollary}[definition]{Corollary}
\theoremstyle{remark}
\newtheorem{remark}[definition]{Remark}
\newcounter{enumctr}
\newcommand{\N}{\mathbb{N}}
\newcommand{\R}{\mathbb{R}}
\newcommand{\rT}{\mathrm {T}}
\providecommand{\keywords}[1]{\textbf{\textbf{Key words: }} #1}
\begin{document}

%\linenumbers

\title{\vspace*{-10mm}
Existence, uniqueness and exponential boundedness of global solutions to delay fractional differential equations}
\author{N.D.~Cong\footnote{\tt ndcong@math.ac.vn, \rm Institute of Mathematics, Vietnam Academy of Science and Technology, 18 Hoang Quoc Viet, 10307 Ha Noi, Viet Nam}
\;and\;
H.T.~Tuan\footnote{\tt httuan@math.ac.vn, \rm Institute of Mathematics, Vietnam Academy of Science and Technology, 18 Hoang Quoc Viet, 10307 Ha Noi, Viet Nam}}
%\date{27/12/2016}
\maketitle
\begin{abstract}
Under a mild Lipschitz condition we prove a theorem on the existence and uniqueness of global solutions to delay fractional differential equations. Then, we establish a result on the exponential boundedness for these solutions. 
\end{abstract}
\keywords{\emph{Fractional differential equations; Delay differential equations with fractional derivatives; Existence and uniqueness; Growth and boundedness.}}

{\it 2010 Mathematics Subject Classification:} {\small 26A33, 34A08, 34A12, 34K12.}
\section{Introduction}
Recently, delay fractional differential equations (DFDEs) have received considerable attentions because they provide mathematical models of real-world  problems in which the fractional rate of change depends on the influence of their hereditary effects, see e.g., \cite{Lakshmikantham,Benchohra,Krol,Yang,Cermak} and the references therein.
The simplest form of DFDEs is 
\begin{equation}\label{add_eq}
\begin{cases}
^{C}D^\alpha_{0+}x(t)=f(t,x(t),x(t-r)),\quad t\in [0,T],\\
x(t)=\phi(t),\quad\forall t\in[-r,0],
\end{cases}
\end{equation}
where $\alpha>0$ is the order of the Caputo fractional derivative $^{C}D^\alpha_{0+}$, and the initial condition $\phi$ is a continuous function on the interval $[-r,0]$, $r,T>0$ are fixed real parameters. For this equation, the first basic and important problem is to show the existence and uniqueness of solutions under some reasonable conditions. It is well known that in the case of ordinary differential equations ($\alpha$ is an integer), under some Lipschitz conditions a delay equation has an unique local solution (see Hale and Lunel \cite[Section 2.2]{Hale}); furthermore, by using continuation property (see  \cite[Section 2.3]{Hale}) one can derive global solutions as well. However, in the fractional case (non-integer $\alpha$) the problem of existence and uniqueness of (local and global) solutions is more complex because of the  \textit{fractional order} feature of the equation which implies history dependence of the solutions, hence, among others, the continuation property is not applicable. 

Abbas~\cite{Abbas} has  discussed the existence of solutions to the DFDE \eqref{add_eq} and used Krasnoselskii's fixed point theorem to show the existence of at least one local solution to \eqref{add_eq}. Y. Jalilian and R. Jalilian  \cite{Jalilian} have proved the existence of a global solution on a finite interval to \eqref{add_eq} for a class of DFDEs by using a fixed point theorem of Leray--Schauder type. Note that in the two papers \cite{Abbas} and \cite{Jalilian} the authors did not derive uniqueness of the solutions.
Yang and Cao \cite{Yang} have dealt with the problem of existence and uniqueness of solution of  a general DFDE, they presented theorems on existence and uniqueness of solutions to the initial values problems for DFDE, however the Lipschitz condition they assume is restrictive and hard to verify because it is a Lipschitz condition with respect to an infinite dimensional (functional) variable which varies in a (big) functional space $B$. Recently, Wang et al.~\cite[Theorem 3.2]{Wang} have formulated and proved uniqueness of global solutions to the equation similar to \eqref{add_eq}  by using the generalized Gronwall inequality. However their proof contain a flaw which make the proof incomplete (see Remark \ref{Remark_1}).

In the investigation of long term behavior of the DFDEs, as in the classical theory of dynamical systems, the understanding of growth rate of the solutions is of basic importance. One needs to know whether the solutions are exponentially bounded so that the theory of Lyapunov exponents as well as the tools of the Laplace transform are applicable to the study of the qualitative behavior of the systems.
Wang et al.~\cite[Theorem 3.3]{Wang} have formulated and proved a theorem on exponential boundedness of solutions of DFDEs; however there are flaws in the proof and the statement of their theorem is false (see Remak \ref{Remark_3} for details).

This paper is devoted to the investigation of the existence, uniqueness and growth rate of global solutions of DFDEs. Namely, we  prove a general theorem on the existence and uniqueness of global solutions to the equation \eqref{add_eq} under a mild Lipschitz condition on $f$ (see Theorem \ref{main_R_1}). An interesting feature of our result is the fact that for the existence and uniqueness of the global solutions of \eqref{add_eq} we do not need to require Lipschitz property of $f$ with respect to the third  (delay) variable of $f$, but only the Lipschitz property of $f$ with respect to the second (non-delay) variable. As concerns the growth rate of solutions of DFDEs, we derive a result on the exponential boundedness of solutions to the equation \eqref{add_eq} (see Theorem \ref{main_R_2}).

The rest of this paper is organized as follows. In Section~\ref{sec.preliminary}, we recall some basic notations of fractional calculus and a lemma concerning the equivalence between a DFDE and a Volterra integral equation. In Section~\ref{sec.main}, we show the existence and uniqueness of global solutions to DFDEs (Theorem \ref{main_R_1}). Finally, in Section~\ref{sec.bound} we establish a result on the exponential boundedness of these solutions (Theorem \ref{main_R_2}). 

\section{Preliminaries}\label{sec.preliminary}
This section is devoted to recalling briefly a framework of DFDEs. We first introduce some notations which are used throughout this paper. Let $\R_{\geq 0}$ be the set of all non-negative real numbers and $\R^d$ be the $d$-dimensional Euclidean space endowed with a norm $\|\cdot\|$. For any $[a,b]\subset [a,\infty)$, let $C([a,b];\R^d)$ be the space of continuous functions $\xi:[a,b]\rightarrow\R^d$ with the sup norm $\|\cdot\|_\infty$, i.e.,
\[
\|\xi\|_\infty:=\sup_{a\le t\le b}\|\xi(t)\|,\quad\forall \xi\in C([a,b];\R^d).
\]
For $\alpha>0$, $[a,b]\subset \R$ and a measurable function $x:[a,b]\rightarrow \R$  such that $\int_a^b|x(\tau)|\;d\tau<\infty$, the Riemann--Liouville integral operator of order $\alpha$ is defined by
\[
(I_{a+}^{\alpha}x)(t):=\frac{1}{\Gamma(\alpha)}\int_a^t(t-\tau)^{\alpha-1}x(\tau)\;d\tau,\quad t\in (a,b],
\]
where $\Gamma(\cdot)$ is the Gamma function. The \emph{Caputo fractional derivative} $^{C\!}D_{a+}^\alpha x$ of a function $x\in AC^m([a,b];\R)$ is defined by
\[
(^{C\!}D_{a+}^\alpha x)(t):=(I_{a+}^{m-\alpha}D^mx)(t),\quad t\in (a,b],
\]
where $AC^m([a,b];\R)$ denotes the space of real functions $x$ which has continuous derivatives up to order $m-1$ on the interval $[a,b]$ and the $({m-1})^{th}$-order derivative $x^{(m-1)}$ is absolutely continuous, $D^m=\frac{d^m}{dt^m}$ is the usual $m^{th}$-order derivative and $m:=\lceil\alpha\rceil$ is the smallest integer larger or equal to $\alpha$. The Caputo fractional derivative of a $d$-dimensional vector function $x(t)=(x_1(t),\cdots,x_d(t))^{\rT}$ is defined component-wise as
\[
(^{C\!}D_{a+}^\alpha x)(t):=(^{C\!}D_{a+}^\alpha x_1(t),\cdots,^{C\!\!}D_{a+}^\alpha x_d(t))^{\rT}.
\]
From now on, we consider only the case $\alpha\in (0,1)$. Let $r$ be an arbitrary positive constant, and  $\phi\in C([-r,0];\R^d)$ be a given
continuous function, we study the delay Caputo fractional differential equations
\begin{equation}\label{IntroEq}
^{C\!}D_{0+}^\alpha x(t)=f(t,x(t),x(t-r)),\quad t\in [0,T],
\end{equation}
with the initial condition 
\begin{equation}\label{Ini_Cond}
x(t)=\phi(t),\quad \forall t\in[-r,0],
\end{equation} 
where $x\in \R^d$, $T>0$ and $f:[0,T]\times\R^d\times\R^d \rightarrow \R^d$ is continuous. 

We also consider the initial condition problem \eqref{IntroEq}-\eqref{Ini_Cond} on the infinite time interval $[-r,\infty)$ as well with the obvious change from finite $T$ to $\infty$.

A function $\varphi(\cdot,\phi)\in C([-r,T];\R^d)$ is called a \emph{solution} of the initial condition problem \eqref{IntroEq}-\eqref{Ini_Cond} over the interval $[-r,T]$ if 
\begin{equation*}
\begin{cases}
^{C\!}D^\alpha_{0+}\varphi(t,\phi)=f(t,\varphi(t,\phi),\varphi(t-r,\phi)),\quad t\in [0,T],\\
\varphi(t,\phi)=\phi(t),\quad \forall t\in[-r,0].
\end{cases}
\end{equation*}
To prove the existence of solutions to the initial condition problem \eqref{IntroEq}-\eqref{Ini_Cond} and to investigate the asymptotic behavior of solutions to this problem we need to convert it into an equivalent delay integral equation with the initial condition \eqref{Ini_Cond}. This is stated in the following lemma.
\begin{lemma}\label{Equivalent_eq}
The function $\varphi\in C([-r,T];\R^d)$ is a solution of the initial condition problem \eqref{IntroEq}-\eqref{Ini_Cond} on the interval $[-r,T]$ if and only if it is a solution of the delay integral equation
\begin{equation}\label{Integral_eq}
x(t)=\phi(0)+\frac{1}{\Gamma(\alpha)}\int_0^t (t-\tau)^{\alpha-1}f(\tau,x(\tau),x(\tau-r))\;d\tau,\quad \forall t\in [0,T]
\end{equation}
with the initial condition
\begin{equation}\label{Ini_integral_eq}
x(t)=\phi(t),\quad\forall t\in[-r,0].
\end{equation}
\end{lemma}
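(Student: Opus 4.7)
The plan is to reduce the statement to the standard equivalence between a scalar Caputo fractional initial-value problem and its associated Volterra integral equation. The delay structure is essentially inert: given any candidate $\varphi\in C([-r,T];\R^d)$ with $\varphi|_{[-r,0]}=\phi$, the function $g(\tau):=f(\tau,\varphi(\tau),\varphi(\tau-r))$ is continuous on $[0,T]$ by continuity of $f$ and $\varphi$. Hence on $[0,T]$ the DFDE becomes $^{C\!}D_{0+}^\alpha\varphi(t)=g(t)$ and the integral equation becomes $\varphi(t)=\phi(0)+(I_{0+}^\alpha g)(t)$, and all that is needed is the classical undelayed equivalence between these two, subject to the initial value $\varphi(0)=\phi(0)$.

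For the $(\Rightarrow)$ direction, assuming $\varphi$ satisfies \eqref{IntroEq}--\eqref{Ini_Cond}, I apply $I_{0+}^\alpha$ to both sides of the fractional equation on $[0,T]$ and invoke the composition identity $(I_{0+}^\alpha\,{}^{C\!}D_{0+}^\alpha\varphi)(t)=\varphi(t)-\varphi(0)$, valid for $\alpha\in(0,1)$ whenever the Caputo derivative exists. Since $\varphi(0)=\phi(0)$, this is exactly \eqref{Integral_eq}, and \eqref{Ini_integral_eq} is identical to \eqref{Ini_Cond}. For the $(\Leftarrow)$ direction, assuming \eqref{Integral_eq}--\eqref{Ini_integral_eq}, I write $\varphi(t)=\phi(0)+(I_{0+}^\alpha g)(t)$ on $[0,T]$ with $g$ continuous, so in particular $\varphi(0)=\phi(0)$, and apply $^{C\!}D_{0+}^\alpha$ to both sides. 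The key identity is ${}^{C\!}D_{0+}^\alpha(c+I_{0+}^\alpha g)(t)=g(t)$ for any constant $c$ and any continuous $g$; it follows from the defining formula $^{C\!}D_{0+}^\alpha\psi=\frac{d}{dt}I_{0+}^{1-\alpha}(\psi-\psi(0))$ combined with the semigroup law $I_{0+}^{1-\alpha}I_{0+}^\alpha=I_{0+}^1$, since $\frac{d}{dt}I_{0+}^1 g=g$. This recovers \eqref{IntroEq}.

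The only delicate point I expect is the regularity required for the Caputo derivative to be legitimate. In the forward direction this is built into the hypothesis that $\varphi$ is a solution (so that $\varphi|_{[0,T]}$ lies in the space on which $^{C\!}D_{0+}^\alpha$ was defined, namely $AC$). In the reverse direction one must notice that although $I_{0+}^\alpha g$ need not belong to $AC^1([0,T];\R^d)$ when $g$ is merely continuous, the composite ${}^{C\!}D_{0+}^\alpha(c+I_{0+}^\alpha g)$ nevertheless exists and equals $g$ in the classical sense, by the pointwise computation above. This is a standard fact from fractional calculus (see, e.g., Diethelm's monograph), which I would cite rather than reprove. No new machinery is required beyond this; the lemma is a direct transcription of the undelayed equivalence, with the delayed argument of $f$ absorbed into the continuous forcing term $g$.
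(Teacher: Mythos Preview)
Your proposal is correct and is essentially the same approach as the paper's: the paper simply cites Diethelm~\cite[Lemma~6.2, p.~86]{Kai} for the standard undelayed equivalence, which is exactly the identity you invoke after absorbing the delayed argument into the continuous forcing $g$. Your write-up just makes the reduction explicit and spells out the two composition identities, but there is no substantive difference in strategy.
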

\begin{proof}
Using the same arguments as in the proof of \cite[Lemma 6.2, p.~86]{Kai}.
\end{proof}

\section{Existence and uniqueness of global solutions to delay fractional differential equations}\label{sec.main}
We show that under a mild Lipschitz condition a DFDE has unique global solution. 

\begin{theorem}[Existence and uniqueness of global solutions to DFDEs]\label{main_R_1}
Assume that  $f:[0,T]\times\R^d\times\R^d \rightarrow \R^d$ is continuous and satisfies the following Lipschitz condition with respect to the second variable: there exists a non-negative  continuous function  $L:[0,T]\times \R^d\rightarrow \R_{\geq 0}$ such that
\begin{equation}\label{Lipschitz cond}
\|f(t,x,y)-f(t,\hat x, y)\|\leq L(t,y)\|x-\hat x\|
\end{equation}
for all $t\in [0,T]$, $x,y,\hat x \in \R^d$.
Then, the initial condition problem \eqref{IntroEq}-\eqref{Ini_Cond} has a unique global solution $\varphi(\cdot, \phi)$ on the interval $[-r,T]$.
\end{theorem}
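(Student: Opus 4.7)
The plan is to prove the theorem by the \emph{method of steps}. Because the delay $r>0$ is strictly positive and $\phi$ is given on $[-r,0]$, on the interval $[0,r]$ the delayed argument $x(\tau-r)=\phi(\tau-r)$ is already a known continuous function of $\tau$. Substituting this into the integral equation from Lemma~\ref{Equivalent_eq} reduces the problem on $[0,r]$ to a non-delay fractional integral equation with a given source. The same reduction applies on each successive interval $[kr,(k+1)r]$, so iterating $\lceil T/r\rceil$ times builds the solution on $[-r,T]$.

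For the first step, I consider the operator $T_0:C([0,r];\R^d)\to C([0,r];\R^d)$ given by
\begin{equation*}
(T_0 x)(t) := \phi(0)+\frac{1}{\Gamma(\alpha)}\int_0^t (t-\tau)^{\alpha-1}f(\tau,x(\tau),\phi(\tau-r))\,d\tau.
\end{equation*}
Continuity of $\phi$ and $L$ together with compactness of $[0,r]$ give $M_0:=\sup_{\tau\in[0,r]} L(\tau,\phi(\tau-r))<\infty$, and a routine induction on $n$ based on the Lipschitz hypothesis \eqref{Lipschitz cond} yields
\begin{equation*}
\|(T_0^n x)(t)-(T_0^n y)(t)\| \leq \frac{M_0^n\,t^{n\alpha}}{\Gamma(n\alpha+1)}\,\|x-y\|_\infty,\qquad t\in[0,r],\ n\geq 1.
\end{equation*}
Since $M_0^n r^{n\alpha}/\Gamma(n\alpha+1)$ is the general term of the convergent Mittag--Leffler series $E_\alpha(M_0 r^\alpha)$, it tends to zero, so $T_0^N$ is a strict contraction on $(C([0,r];\R^d),\|\cdot\|_\infty)$ for some $N$. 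The generalized (Weissinger) Banach fixed-point theorem then produces a unique fixed point, which by Lemma~\ref{Equivalent_eq} is the unique solution on $[-r,r]$.

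For the inductive step, assume the solution $\varphi$ has been uniquely constructed on $[-r,kr]$. On $[kr,(k+1)r]$ the function $\psi(\tau):=\varphi(\tau-r)$ is known and continuous, and splitting the integral in \eqref{Integral_eq} at $kr$ recasts the problem as a fixed-point equation with continuous source term
\begin{equation*}
h_k(t):=\phi(0)+\frac{1}{\Gamma(\alpha)}\int_0^{kr}(t-\tau)^{\alpha-1}f(\tau,\varphi(\tau),\varphi(\tau-r))\,d\tau
\end{equation*}
(satisfying $h_k(kr)=\varphi(kr)$) plus the new contribution $\frac{1}{\Gamma(\alpha)}\int_{kr}^t(t-\tau)^{\alpha-1}f(\tau,x(\tau),\psi(\tau))\,d\tau$. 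The same Weissinger argument, now with $M_k:=\sup_{\tau\in[kr,(k+1)r]} L(\tau,\psi(\tau))$, provides the unique extension, and continuity across the junction $t=kr$ is automatic because both sides of the integral equation are continuous in $t$. I expect the main technical obstacle to be precisely this fixed-point step: a one-shot Banach contraction on an interval of length $r$ would require $M_k r^\alpha/\Gamma(\alpha+1)<1$, which need not hold; the Weissinger trick of controlling iterates by the Mittag--Leffler tail avoids any smallness assumption on $r$ or on $M_k$ and is what secures a true global-in-time conclusion.
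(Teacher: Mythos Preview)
Your proof is correct and follows essentially the same method-of-steps strategy as the paper: reduce the problem on each subinterval $[kr,(k+1)r]$ to a non-delay fractional integral equation (since the delayed argument is already determined by the previous step) and then apply a fixed-point argument. The only difference is the contraction device---where you iterate the operator and invoke Weissinger, bounding $\|T_k^n x - T_k^n y\|_\infty$ by the Mittag--Leffler tail $M_k^n r^{n\alpha}/\Gamma(n\alpha+1)$, the paper instead equips $C([kr,(k+1)r];\R^d)$ with the Bielecki-type weighted metric $d_{\beta_k}(\xi,\hat\xi)=\sup_t\|\xi(t)-\hat\xi(t)\|/E_\alpha(\beta_k t^\alpha)$ (with $\beta_k>2M_k$) so that the operator is already a contraction at the first iterate; both tricks serve the identical purpose of avoiding any smallness assumption on $r$ or on $M_k$.
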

\begin{proof}
According to Lemma \ref{Equivalent_eq}, the equation \eqref{IntroEq} with the initial condition \eqref{Ini_Cond} is equivalent to the initial condition problem \eqref{Integral_eq}-\eqref{Ini_integral_eq}.  

First we consider the case $0<T\leq r$. In this case, the equation \eqref{Integral_eq} has the form
\[
x(t)=\phi(0)+\frac{1}{\Gamma(\alpha)}\int_0^t (t-\tau)^{\alpha-1}f(\tau,x(\tau),\phi(\tau-r))\;d\tau,\quad \forall t\in [0,T].
\]
For this integral equation, by Tisdell~\cite[Theorem 6.4, p. 310]{Tisdell}, there exists a unique solution  on the interval $[0,T]$. Denote that solution by $\xi^*_r$ and put
\begin{equation*} 
\varphi_T(t,\phi):=
\begin{cases}
\phi(t), \quad \forall t\in [-r,0],\\
\xi^*_r(t),\quad \forall t\in [0,T].
\end{cases}
\end{equation*}
Then $\varphi_T(t,\phi)$ is the unique solution of the problem \eqref{Integral_eq}-\eqref{Ini_integral_eq} on $[-r,T]$.

For the case $T>r$, we divide the interval $[0,T]$ into $[0,r]\cup \dots \cup [(k_0-1)r, k_0r]\cup [k_0r,T]$, where $k_0\in \N$ and $0\leq T-k_0r<r$. On the interval $[-r,r]$, using the same arguments as above, we can find a unique solution of the initial condition problem \eqref{Integral_eq}-\eqref{Ini_integral_eq} which is denoted by $\varphi_r(\cdot,\phi)$. We will prove the existence and uniqueness of solution on the interval $[-r,k_0r]$ by induction. Assume that the problem \eqref{Integral_eq}-\eqref{Ini_integral_eq} has a unique solution on the interval $[-r,kr]$ for some $1\leq k<k_0$. We denote that solution by $\varphi_{kr}(\cdot,\phi)$. On $[kr,(k+1)r]$, we define an operator $\mathcal{T}_{(k+1)r,\phi}:C([kr,(k+1)r];\R^d)\rightarrow C([kr,(k+1)r];\R^d)$ as follows:
\begin{align*}
(\mathcal{T}_{(k+1)r,\phi}\xi)(t):=\phi(0)+\frac{1}{\Gamma(\alpha)}\int_0^{kr} (t-\tau)^{\alpha-1}f(\tau,\varphi_{kr}(\tau,\phi),\varphi_{kr}(\tau-r,\phi))\;d\tau\\
+\frac{1}{\Gamma(\alpha)}\int_{kr}^{t}(t-\tau)^{\alpha-1}f(\tau,\xi(\tau), \varphi_{kr}(\tau-r,\phi))\;d\tau,\quad \forall t\in [kr,(k+1)r].
\end{align*}
Let $\beta_k$ be a positive constant satisfying $\beta_k>2\max_{t\in [kr,(k+1)r]}L(t,\varphi_{kr}(t-r,\phi))$. On the space $C([kr,(k+1)r];\R^d)$, we define a new metric $d_{\beta_k}$ by
\[
d_{\beta_k}(\xi,\hat\xi):=\sup_{t\in[kr,(k+1)r]}\frac{\|\xi(t)-\hat\xi(t)\|}{E_{\alpha} (\beta_k t^\alpha)},\quad \forall \xi,\;\hat\xi\in C([kr,(k+1)r];\R^d),
\]
here $E_\alpha:\R\rightarrow \R$ is the Mittag-Leffler function which is defined by 
\[
E_\alpha(z)=\sum_{k=0}^\infty \frac{z^k}{\Gamma(\alpha k+1)},\quad \forall z\in \R.
\]
Then, the space $C(kr,(k+1)r];\R^d)$ equipped the metric $d_{\beta_k}$ is complete. We will show that the operator $\mathcal{T}_{(k+1)r,\phi}$ is contractive on $(C([kr,(k+1)r];\R^d), d_{\beta_k})$. Indeed, for any $\xi,\hat\xi\in C([kr,(k+1)r];\R^d)$ and any $t\in [kr,(k+1)r]$ we have
\begin{align*}
&\|(\mathcal{T}_{(k+1)r,\phi}\xi)(t)-(\mathcal{T}_{(k+1)r,\phi}\hat\xi)(t)\|\\
&\hspace{0.1cm}\leq\; \frac{\displaystyle{\max_{t\in [kr,(k+1)r]}L(t,\varphi_{kr}(t-r,\phi))}}{\Gamma(\alpha)}\int_{kr}^t (t-\tau)^{\alpha-1}\|\xi(\tau)-\hat\xi(\tau)\|\;d\tau\\
&\hspace{0.1cm}\leq\; \frac{\displaystyle{\max_{t\in [kr,(k+1)r]}L(t,\varphi_{kr}(t-r,\phi))}}{\Gamma(\alpha)}\int_{kr}^t (t-\tau)^{\alpha-1}E_\alpha(\beta_k \tau^\alpha)\frac{\|\xi(\tau)-\hat\xi(\tau)\|}{E_\alpha(\beta_k \tau^\alpha)}\;d\tau.
\end{align*}
This implies that
\begin{align*}
&\frac{\|(\mathcal{T}_{kr,\phi}\xi)(t)-(\mathcal{T}_{kr,\phi}\hat\xi)(t)\|}{E_\alpha(\beta_k t^\alpha)}\\
&\hspace{0.1cm}\leq\; \frac{\displaystyle{\max_{t\in [kr,(k+1)r]}L(t,\varphi_{kr}(t-r,\phi))}}{E_\alpha(\beta_k t^\alpha)}d_{\beta_k}(\xi,\hat\xi)\frac{1}{\Gamma(\alpha)}\int_{kr}^t (t-\tau)^{\alpha-1}E_\alpha(\beta_k \tau^\alpha)\;d\tau\\
&\hspace{0.1cm}\leq\; \frac{\displaystyle{\max_{t\in [kr,(k+1)r]}L(t,\varphi_{kr}(t-r,\phi))}}{E_\alpha(\beta_k t^\alpha)}d_{\beta_k}(\xi,\hat\xi)
\frac{1}{\Gamma(\alpha)}\int_0^{t} (t-\tau)^{\alpha-1}E_\alpha(\beta_k \tau^\alpha)\;d\tau\\
&\hspace{0.1cm}\leq\; \frac{\displaystyle{\max_{t\in [kr,(k+1)r]}L(t,\varphi_{kr}(t-r,\phi))}}{E_\alpha(\beta_k t^\alpha)}d_{\beta_k}(\xi,\hat\xi)I^{\alpha}_{0+} \left(^{C\!}D^\alpha_{0+}\left(\frac{E_\alpha(\beta_k t^\alpha)}{\beta_k}\right)\right)\\
&\hspace{0.1cm}\leq\; \frac{\displaystyle{\max_{t\in [kr,(k+1)r]}L(t,\varphi_{kr}(t-r,\phi))}}{\beta_k}d_{\beta_k}(\xi,\hat\xi)
\end{align*}
for all $t\in[kr,(k+1)r]$. Therefore, 
\begin{eqnarray*}
d_{\beta_k}(\mathcal{T}_{(k+1)r,\phi}\xi, \mathcal{T}_{(k+1)r,\phi}\hat\xi)&\leq& \frac{\displaystyle{\max_{t\in [kr,(k+1)r]}L(t,\varphi_{kr}(t-r,\phi))}}{\beta_k}d_{\beta_k}(\xi,\hat\xi)\\
&\leq&\frac{1}{2}d_{\beta_k}(\xi,\hat\xi)
\end{eqnarray*}
for all $\xi,\hat\xi\in C([kr,(k+1)r];\R^d)$. By virtue of Banach fixed point theorem, there exists a unique fixed point $\xi^*_{(k+1)r}$ of $\mathcal{T}_{(k+1)r,\phi}$ in $C([kr,(k+1)r];\R^d)$. Put
\begin{equation}
\varphi_{(k+1)r}(t,\phi):=\begin{cases}
\varphi_{kr}(t,\phi),\quad \forall t\in [-r,kr],\\
\xi^*_{(k+1)r}(t),\quad \forall t\in[kr,(k+1)r].
\end{cases}
\end{equation}
Then, $\varphi_{(k+1)r}(t,\phi)$ is the unique solution of the problem \eqref{Integral_eq}-\eqref{Ini_integral_eq} on $[-r,(k+1)r]$.

Finally, on the interval $[k_0r,T]$, we construct an operator $\mathcal{T}_\phi:C([k_0r,T];\R^d)\rightarrow C([k_0r,T];\R^d)$ by
\begin{align*}
(\mathcal{T}_\phi)(t):=\phi(0)+\frac{1}{\Gamma(\alpha)}\int_0^{k_0r}(t-\tau)^{\alpha-1}f(\tau,\varphi_{k_0r}(\tau,\phi),\varphi_{k_0r}(\tau-r,\phi))\;d\tau\\
+\frac{1}{\Gamma(\alpha)}\int_{k_0r}^{t}(t-\tau)^{\alpha-1}f(\tau,\xi(\tau),\varphi_{k_0r}(\tau-r,\phi))\;d\tau,\quad \forall t\in[k_0r,T].
\end{align*}
Let $\beta_{k_0}$ be a positive number satisfying 
\[
\beta_{k_0}>2\max_{t\in[k_0r,T]}L(t,\varphi_{k_0r}(t-r,\phi)).
\]
By constructing a new metric $d_{\beta_{k_0}}$ on $C([k_0r,T];\R^d)$ as 
$$
d_{\beta_{k_0}}(\xi,\hat\xi):=\sup_{t\in [k_0r,T]}\frac{\|\xi(t)-\hat\xi(t)\|}{E_\alpha(\beta_{k_0} t^\alpha)}
$$ 
and repeating arguments as above, we can show that the operator $\mathcal{T}_\phi$ has a unique fixed point $\xi^*$ on $[k_0r,T]$. Define a function
\begin{equation*}
\varphi_T(t,\phi):=\begin{cases}
\varphi_{k_0r}(t,\phi),\quad \forall t\in[-r,k_0r],\\
\xi^*(t),\quad \forall t\in[k_0r,T].
\end{cases}
\end{equation*}
It is evident that $\varphi_T(\cdot,\phi)$ is the unique solution of the problem \eqref{Integral_eq}-\eqref{Ini_integral_eq} on the interval $[-r,T]$. 
\end{proof}

\begin{corollary}[Existence and uniqueness of global solutions to DFDEs on half line]
If the assumptions of Theorem~\ref{main_R_1} hold on the half line $[-r,\infty)$ then the initial condition problem \eqref{IntroEq}-\eqref{Ini_Cond} has a unique global solution $\varphi(\cdot, \phi)$ on the infinite interval $[-r,\infty)$.
\end{corollary}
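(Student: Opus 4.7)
The plan is to construct the global solution by gluing together the finite-time solutions guaranteed by Theorem \ref{main_R_1}. Since the hypotheses hold on $[-r,\infty)$, they hold in particular on $[-r,n]$ for every positive integer $n$; in particular, the restriction of $f$ to $[0,n]\times\R^d\times\R^d$ is continuous and the Lipschitz function $L$ restricted to $[0,n]\times\R^d$ is continuous and non-negative. Applying Theorem \ref{main_R_1} with $T=n$ therefore yields, for each $n\in\N$, a unique solution $\varphi_n(\cdot,\phi)\in C([-r,n];\R^d)$ of the initial condition problem \eqref{IntroEq}-\eqref{Ini_Cond} on $[-r,n]$.

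Next I would establish compatibility: for $m\le n$ the restriction $\varphi_n(\cdot,\phi)|_{[-r,m]}$ is a continuous function satisfying \eqref{Integral_eq}-\eqref{Ini_integral_eq} on $[-r,m]$, so by the uniqueness statement of Theorem \ref{main_R_1} applied at level $m$ we must have $\varphi_n(t,\phi)=\varphi_m(t,\phi)$ for every $t\in[-r,m]$. This consistency allows a well-defined pointwise definition
\[
\varphi(t,\phi):=\varphi_n(t,\phi)\quad\text{for any }n\in\N\text{ with }n\ge t,\ t\in[-r,\infty).
\]

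It then remains to verify that $\varphi(\cdot,\phi)\in C([-r,\infty);\R^d)$ is a solution. Continuity is local: for each $t_0\in[-r,\infty)$ pick $n> t_0$; then $\varphi$ agrees with $\varphi_n\in C([-r,n];\R^d)$ on a neighborhood of $t_0$ in $[-r,n]$. The initial condition \eqref{Ini_Cond} holds because $\varphi_1$ already satisfies it. For any $t\in[0,\infty)$ pick $n>t$; since $\varphi_n$ satisfies \eqref{IntroEq} on $[0,n]$ and agrees with $\varphi$ on $[-r,n]$ (so in particular at $t$ and $t-r$), the equation \eqref{IntroEq} holds for $\varphi$ at $t$.

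Uniqueness on the half line follows from the same consistency argument in reverse: any second global solution $\psi(\cdot,\phi)\in C([-r,\infty);\R^d)$ restricts on each $[-r,n]$ to a solution of \eqref{IntroEq}-\eqref{Ini_Cond}, which by Theorem \ref{main_R_1} must equal $\varphi_n(\cdot,\phi)=\varphi(\cdot,\phi)|_{[-r,n]}$, so $\psi\equiv\varphi$. There is no genuine obstacle here; the mild point to be explicit about is that the hypotheses of Theorem \ref{main_R_1} on $[-r,\infty)$ indeed restrict to valid hypotheses on each $[-r,n]$, so that the finite-interval theorem can be applied uniformly in $n$.
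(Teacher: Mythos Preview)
Your proof is correct and follows essentially the same approach as the paper: both apply Theorem~\ref{main_R_1} on finite intervals, use the uniqueness part to obtain consistency of the finite-interval solutions, and glue them into a single function on $[-r,\infty)$. Your version is in fact more detailed than the paper's, since you explicitly verify continuity, the equation, and uniqueness on the half line, whereas the paper leaves these as evident.
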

\begin{proof}
Suppose that the assumptions of the corollary are satisfied. Note that if $T_1>T_2>0$ are arbitrary two positive numbers. Then the assumptions of Theorem~\ref{main_R_1} hold on both intervals $[-r,T_1]$ and $[-r,T_2]$ implying that we find unique global solutions $\varphi_1$ on $[-r,T_1]$ and $\varphi_2$ on $[-r,T_2]$. Due to uniqueness the function $\varphi_1$ coincides with the function $\varphi_2$ on $[-r,T_2]$.
To complete the proof we establish a new function $\varphi(\cdot,\phi)$ on $[-r,\infty)$ as below
\begin{equation*}
\varphi(t,\phi):=\begin{cases}
\phi(t),\quad\text{if}\;\; t\in [-r,0],\\
\varphi_t(t,\phi),\quad\text{if}\;\; t>0,
\end{cases}
\end{equation*}
where $\varphi_t(\cdot,\phi)$ is the function defined as in the proof of Theorem \ref{main_R_1}. Then, this function is the unique solution to the initial condition problem \eqref{IntroEq}-\eqref{Ini_Cond} on $[-r,\infty)$.
\end{proof}

\begin{remark}\label{Remark_1}
Wang et al. \cite[Theorem 3.2, p. 48]{Wang} have proved a result on the uniqueness of solutions to a DFDE like the problem \eqref{IntroEq}-\eqref{Ini_Cond} under the Lipschitz assumption 
\[
\|f(t,x,y)-f(t,\hat{x},\hat{y})\|\leq L(\|x-\hat{x}\|+\|y-\hat{y}\|), \quad \forall\; t\in\R_{\geq 0},\;\forall \;x,y,\hat{x},\hat{y}\in \R.
\]
 Their approach is based on the generalized Gronwall inequality, and the key point in their proof is the inequality (18) of \cite[p. 49]{Wang}. They deduce this inequality from the inequality (17) of \cite[p. 49]{Wang}. However, this deduction is incorrect due to the fractional nature of the equations. Thus, the proof of Wang et al. is incomplete.  
\end{remark}

\section{Exponential boundedness of solutions to delay fractional differential equations}\label{sec.bound}
For the qualitative theory of DFDEs the study of growth rate of solutions is of basic importance. In this section we show that  solutions of DFDEs are exponentially bounded. 

Let $\phi\in C([-r,0];\R^d)$ be an arbitrary continuous function. We consider the initial condition problem \eqref{IntroEq}-\eqref{Ini_Cond} on the semi real axis $[-r,\infty)$. 
A solution $\varphi(\cdot,\phi)$ of the initial condition problem \eqref{IntroEq}-\eqref{Ini_Cond} is called {\em exponentially bounded} if there exist positive constants $C$ and $\lambda$  such that
\[
\|\varphi(t,\phi)\|\leq C\exp{(\lambda t)},\quad \forall t\geq 0.
\]
The main result in this section is the following theorem on exponential boundedness of solutions of the initial condition problem \eqref{IntroEq}-\eqref{Ini_Cond}.
\begin{theorem}[Exponential boundedness of solutions to delay fractional differential equations]\label{main_R_2}
Assume that $f$ is continuous and satisfies
the following conditions:
\begin{itemize}
\item [(H1)] there exists a positive constant $L$ such that
\[
\|f(t,x,y)-f(t,\hat x,\hat y)\|\leq L(\|x-\hat x\|+\|y-\hat y\|),\quad\forall t\in\R_{\geq 0},x,y,\hat x,\hat y\in\R^d;
\]
\item [(H2)] there exists a constant $\beta>2L$ such that
\[
\sup_{t\geq 0}\frac{\int_0^t (t-\tau)^{\alpha-1}\|f(\tau,0,0)\|\;d\tau}{E_\alpha(\beta t^\alpha)}<\infty.
\]
\end{itemize}
Then the global solution $\varphi(\cdot,\phi)$ on the interval $[-r,\infty)$ of the initial condition problem \eqref{IntroEq}-\eqref{Ini_Cond} is exponentially bounded. More precisely, there exists a constant $C>0$ such that 
\[
\|\varphi(t,\phi)\|\leq C E_\alpha(\beta t^\alpha),\quad \forall t\geq 0.
\]
\end{theorem}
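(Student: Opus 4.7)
The plan is to work with the integral representation from Lemma~\ref{Equivalent_eq} and close a monotone fixed-point-type inequality in the Mittag-Leffler weighted norm $\|\cdot\|/E_\alpha(\beta t^\alpha)$. First I would apply (H1) in the form $\|f(\tau,x,y)\|\le\|f(\tau,0,0)\|+L\|x\|+L\|y\|$ to obtain, for every $t\ge 0$,
\[
\|\varphi(t,\phi)\|\le\|\phi(0)\|+\frac{1}{\Gamma(\alpha)}\int_0^t(t-\tau)^{\alpha-1}\|f(\tau,0,0)\|\,d\tau+\frac{L}{\Gamma(\alpha)}\int_0^t(t-\tau)^{\alpha-1}\bigl(\|\varphi(\tau,\phi)\|+\|\varphi(\tau-r,\phi)\|\bigr)\,d\tau.
\]
The pivotal identity, already exploited in the proof of Theorem~\ref{main_R_1}, is
\[
\frac{1}{\Gamma(\alpha)}\int_0^t(t-\tau)^{\alpha-1}E_\alpha(\beta\tau^\alpha)\,d\tau=\frac{E_\alpha(\beta t^\alpha)-1}{\beta}\le\frac{E_\alpha(\beta t^\alpha)}{\beta},
\]
which follows from $^{C\!}D^\alpha_{0+}E_\alpha(\beta t^\alpha)=\beta E_\alpha(\beta t^\alpha)$ and $I^\alpha_{0+}\circ{}^{C\!}D^\alpha_{0+}g=g-g(0)$. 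Setting $u(t):=\|\varphi(t,\phi)\|/E_\alpha(\beta t^\alpha)$ and $U(t):=\sup_{0\le s\le t}u(s)$, which is finite for each $t$ by continuity of $\varphi(\cdot,\phi)$ (Theorem~\ref{main_R_1}), the non-delay contribution is immediate: substituting $\|\varphi(\tau,\phi)\|=E_\alpha(\beta\tau^\alpha)u(\tau)\le E_\alpha(\beta\tau^\alpha)U(t)$ and applying the identity bounds $\frac{L}{\Gamma(\alpha)}\int_0^t(t-\tau)^{\alpha-1}\|\varphi(\tau,\phi)\|\,d\tau$ by $(L/\beta)E_\alpha(\beta t^\alpha)U(t)$, while (H2) supplies a constant $C_1$ with $\frac{1}{\Gamma(\alpha)}\int_0^t(t-\tau)^{\alpha-1}\|f(\tau,0,0)\|\,d\tau\le C_1E_\alpha(\beta t^\alpha)$ for all $t\ge 0$.

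The main obstacle is the delay integral, because on $[0,r]$ the values $\varphi(\tau-r,\phi)=\phi(\tau-r)$ are not controlled by $u$, and for $\tau>r$ rewriting $\|\varphi(\tau-r,\phi)\|$ in terms of $u$ shifts the time argument by $r$. My plan is to split the delay integral at $\min(t,r)$. On $[0,\min(t,r)]$ one has $\varphi(\tau-r,\phi)=\phi(\tau-r)$, so the contribution is dominated by $\frac{\|\phi\|_\infty}{\Gamma(\alpha)}\int_0^{\min(t,r)}(t-\tau)^{\alpha-1}\,d\tau$, which equals $\|\phi\|_\infty t^\alpha/\Gamma(\alpha+1)$ for $t\le r$ and $\|\phi\|_\infty\bigl(t^\alpha-(t-r)^\alpha\bigr)/\Gamma(\alpha+1)$ for $t>r$; since $\alpha<1$, the mean value theorem shows $t^\alpha-(t-r)^\alpha\to 0$ as $t\to\infty$, hence this whole piece is bounded uniformly in $t\ge 0$ by some constant $K_1\|\phi\|_\infty$. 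On $[r,t]$ (for $t>r$) the substitution $s=\tau-r$ turns the integral into $\int_0^{t-r}(t-r-s)^{\alpha-1}\|\varphi(s,\phi)\|\,ds$, and the identity applied at argument $t-r$ yields the bound $(\Gamma(\alpha)/\beta)E_\alpha(\beta(t-r)^\alpha)U(t)\le(\Gamma(\alpha)/\beta)E_\alpha(\beta t^\alpha)U(t)$.

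Collecting the four estimates and dividing through by $E_\alpha(\beta t^\alpha)\ge 1$ produces an inequality of the form
\[
u(t)\le A+\frac{2L}{\beta}\,U(t)\qquad\text{for every }t\ge 0,
\]
where $A$ is a finite constant depending only on $\phi$, $L$, $\alpha$, $\beta$ and the supremum in (H2). Since the right-hand side is nondecreasing in $t$, taking the supremum of the left-hand side over $[0,t]$ gives $U(t)\le A+(2L/\beta)U(t)$; because $\beta>2L$, the coefficient $1-2L/\beta$ is strictly positive, and one concludes $U(t)\le A/(1-2L/\beta)$ uniformly in $t\ge 0$. Unwinding the definition of $u$ yields $\|\varphi(t,\phi)\|\le CE_\alpha(\beta t^\alpha)$ with $C:=A/(1-2L/\beta)$, which is precisely the desired exponential boundedness.
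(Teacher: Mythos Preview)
Your argument is correct and takes a genuinely different route from the paper. The paper proves Theorem~\ref{main_R_2} by setting up the weighted Banach space $(C_\beta([0,\infty);\R^d),\|\cdot\|_\beta)$, showing that the integral operator $\mathcal{T}_\phi$ maps this space into itself and is a contraction with factor $2L/\beta<1$, and then invoking the Banach fixed-point theorem; the bound $\|\varphi(t,\phi)\|\le CE_\alpha(\beta t^\alpha)$ is then automatic because the fixed point lies in $C_\beta$. You instead start from the global solution already provided by Theorem~\ref{main_R_1}, use continuity on compacta to guarantee that $U(t)=\sup_{0\le s\le t}\|\varphi(s,\phi)\|/E_\alpha(\beta s^\alpha)$ is finite for each $t$, and close the self-improving inequality $U(t)\le A+(2L/\beta)U(t)$ to obtain a uniform bound. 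Both approaches hinge on the same Mittag--Leffler identity $I^\alpha_{0+}E_\alpha(\beta t^\alpha)=(E_\alpha(\beta t^\alpha)-1)/\beta$; the paper handles the delayed term by the crude monotonicity bound $E_\alpha(\beta(\tau-r)^\alpha)\le E_\alpha(\beta\tau^\alpha)$, whereas your substitution $s=\tau-r$ reduces it exactly to the identity at time $t-r$. Your route is more elementary---no second fixed-point argument, no need to verify that $\mathcal{T}_\phi$ preserves $C_\beta$---and it makes transparent that the exponential bound is really an a~priori estimate on any solution of the integral equation. The paper's route, on the other hand, yields existence and the bound simultaneously without first appealing to Theorem~\ref{main_R_1}. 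One cosmetic slip: when you bound the piece of the delay integral over $[0,\min(t,r)]$ you write $\frac{\|\phi\|_\infty}{\Gamma(\alpha)}\int_0^{\min(t,r)}(t-\tau)^{\alpha-1}\,d\tau$, omitting the factor $L$; this is harmless since you absorb it into $K_1$ anyway.
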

\begin{proof}
We denote by $C_\beta([0,\infty);\R^d)$ the set of all continuous functions
 $\xi\in C([0,\infty);\R^d)$ satisfying the condition
 \[
\|\xi\|_\beta:=\sup_{t\geq 0}\frac{\|\xi(t)\|}{E_\alpha(\beta t^\alpha)} <\infty.
\]
 It is easily seen that $\|\cdot\|_\beta$ is a norm in $C_\beta([0,\infty);\R^d)$ and  
 $(C_\beta([0,\infty);\R^d),\|\cdot\|_\beta)$ is a Banach space.

For any $\phi\in C([-r,0];\R^d)$, we construct a operator $\mathcal{T}_\phi$ on $ (C_\beta([0,\infty);\R^d),\|\cdot\|_\beta)$ as follows:
\begin{align*}
(\mathcal{T}_{\phi}\xi)(t)&:=\phi(0)+\frac{1}{\Gamma(\alpha)}\int_0^{t} (t-\tau)^{\alpha-1}f(\tau,\xi(\tau),\phi(\tau-r))\;d\tau,\quad \forall t\in [0,r],\\
(\mathcal{T}_{\phi}\xi)(t)&:=\phi(0)+\frac{1}{\Gamma(\alpha)}\int_0^{r} (t-\tau)^{\alpha-1}f(\tau,\xi(\tau),\phi(\tau-r))\;d\tau\\
&\hspace{1.33cm}+\frac{1}{\Gamma(\alpha)}\int_r^{t} (t-\tau)^{\alpha-1}f(\tau,\xi(\tau),\xi(\tau-r))\;d\tau,\quad \forall t>r.
\end{align*}
First we consider the case $t\in (0,r]$. In this case we have
\begin{align*}
&\|(\mathcal{T}_{\phi}\xi)(t)\|\leq\; \|\phi(0)\|+\frac{L}{\Gamma(\alpha)}\int_0^t(t-\tau)^{\alpha-1}\Big(\|\xi(\tau)\|\\
&\hspace{3cm}+\|\phi(\tau-r)\|\Big)\;d\tau+\frac{1}{\Gamma(\alpha)}\int_0^t(t-\tau)^{\alpha-1}\|f(\tau,0,0)\|\;d\tau\\
&\hspace{1cm}\leq\; \left(1+\frac{Lr^\alpha}{\Gamma(\alpha+1)}\right)\|\phi\|_\infty+\frac{L}{\Gamma(\alpha)}\int_0^t(t-\tau)^{\alpha-1}E_\alpha(\beta\tau^\alpha)\frac{\|\xi(\tau)\|}{E_\alpha(\beta \tau^\alpha)}\;d\tau\\
&\hspace{3cm}+\frac{1}{\Gamma(\alpha)}\int_0^t(t-\tau)^{\alpha-1}\|f(\tau,0,0)\|\;d\tau,
\end{align*}
where $\|\phi\|_\infty := \sup_{-r\leq s\leq 0}\|\phi(s)\| <\infty$.
This implies that
\begin{align*}
\sup_{t\in[0,r]}\frac{\|(\mathcal{T}_{\phi}\xi)(t)\|}{E_\alpha(\beta t^\alpha)}&\leq \left(1+\frac{Lr^\alpha}{\Gamma(\alpha+1)}\right)\|\phi\|_\infty+\frac{L}{\beta}\|\xi\|_\beta\\
&\hspace{1cm}+\frac{1}{\Gamma(\alpha)}\sup_{t\geq 0}\frac{\int_0^t (t-\tau)^{\alpha-1}\|f(\tau,0,0)\|\;d\tau}{E_\alpha(\beta t^\alpha)}<\infty.
\end{align*}
Next, consider the case $t\geq r$. In this case we have
\begin{align*}
\|(\mathcal{T}_{\phi}\xi)(t)\|\leq&\; \|\phi(0)\|+\frac{L\|\phi\|_\infty}{\Gamma(\alpha)}\int_0^r (t-\tau)^{\alpha-1}\;d\tau\\
&+\frac{1}{\Gamma(\alpha)}\int_0^t(t-\tau)^{\alpha-1}\|f(\tau,0,0)\|\;d\tau\\
&+\frac{L}{\Gamma(\alpha)}\int_0^t (t-\tau)^{\alpha-1}E_\alpha(\beta \tau^\alpha)\frac{\|\xi(\tau)\|}{E_\alpha(\beta \tau^\alpha)}\;d\tau\\
&+\frac{L}{\Gamma(\alpha)}\int_r^t (t-\tau)^{\alpha-1}\|\xi(\tau-r)\|\;d\tau.
\end{align*}
Hence, 
\begin{align*}
\|(\mathcal{T}_{\phi}\xi)(t)\|\leq\;&\|\phi\|_\infty\left(1+\frac{Lt^\alpha}{\Gamma(\alpha+1)}\right)
+\frac{1}{\Gamma(\alpha)}\int_0^t(t-\tau)^{\alpha-1}\|f(\tau,0,0)\|\;d\tau\\
&\hspace{0.5cm}+ \frac{L\|\xi\|_\beta}{\Gamma(\alpha)}\int_0^t(t-\tau)^{\alpha-1}E_\alpha(\beta\tau^\alpha)\;d\tau\\
&\hspace{0.5cm}+\frac{L}{\Gamma(\alpha)}\int_r^t(t-\tau)^{\alpha-1}E_\alpha(\beta (\tau-r)^\alpha)\frac{\|\xi(\tau-r)\|}{E_\alpha(\beta
(\tau-r)^\alpha)}\;d\tau.
\end{align*}
Since $E_\alpha(\cdot)$ is a monotone increasing function on real line, this implies that
\begin{align*}
\frac{\|(\mathcal{T}_{\phi}\xi)(t)\|}{E_\alpha(\beta t^\alpha)}&\leq \|\phi\|_\infty\left(1+\sup_{t\geq r}\frac{Lt^\alpha}{\Gamma(\alpha+1)E_\alpha(\beta t^\alpha)}\right)\\
&\hspace{1cm}+\sup_{t\geq r}\frac{\int_0^t (t-\tau)^{\alpha-1}\|f(\tau,0,0)\|\;d\tau}{\Gamma(\alpha)E_\alpha(\beta t^\alpha)}+\frac{2L}{\beta}\|\xi\|_\beta\\
&\hspace{2cm}<\infty,\quad \forall t\geq r.
\end{align*}
To summarize, the following estimate is true
\[
\sup_{t\geq 0}\frac{\|(\mathcal{T}_{\phi}\xi)(t)\|}{E_\alpha(\beta t^\alpha)}<\infty,\quad
\forall \xi\in (C_\beta([0,\infty);\R^d),\|\cdot\|_\beta).
\]
Thus, $\mathcal{T}_\phi((C_\beta([0,\infty);\R^d),\|\cdot\|_\beta))\subset(C_\beta([0,\infty);\R^d),\|\cdot\|_\beta)$. We now show that the operator $\mathcal{T}_\phi$ is contractive on $(C_\beta([0,\infty);\R^d),\|\cdot\|_\beta)$. Indeed, for any $\xi,\hat\xi\in C_\beta([0,\infty);\R^d),\|\cdot\|_\beta)$, on $[0,r]$ we have the estimate 
\begin{align*}
\|\mathcal{T}_\phi \xi(t)-\mathcal{T}_\phi \hat\xi(t)\|& \leq \frac{L}{\Gamma(\alpha)}\int_0^t(t-\tau)^{\alpha-1}\|\xi(\tau)-\hat\xi(\tau))\|\;d\tau\\
&\leq \frac{L\|\xi-\hat\xi\|_\beta}{\Gamma(\alpha)}\int_0^t (t-\tau)^{\alpha-1}E_\alpha(\beta \tau^\alpha)\;d\tau, \quad\forall t\in(0,r].
\end{align*}
This implies that 
\begin{equation}\label{tam1}
\sup_{t\in [0,r]}\frac{\|\mathcal{T}_\phi\xi(t)-\mathcal{T}_\phi\hat\xi(t)\|}{E_\alpha(\beta t^\alpha)}\leq \frac{L}{\beta}\|\xi-\hat\xi\|_\beta.
\end{equation}
Furthermore, for all $t\geq r$, we have
\begin{align*}
\|\mathcal{T}_\phi\xi(t)-\mathcal{T}_\phi\hat\xi(t)\|\leq\;& \frac{L}{\Gamma(\alpha)}\int_0^t(t-\tau)^{\alpha-1}\|\xi(\tau)-\hat\xi(\tau))\|\;d\tau\\
&\hspace{1cm}+\frac{L}{\Gamma(\alpha)}\int_r^t(t-\tau)^{\alpha-1}\|\xi(\tau-r)-\hat\xi(\tau-r)\|\;d\tau.
\end{align*}
By using the same arguments as above, we obtain
\begin{equation}\label{tam2}
\sup_{t\geq r}\frac{\|\mathcal{T}_\phi \xi(t)-\mathcal{T}_\phi \hat\xi(t)\|}{E_\alpha(\beta t^\alpha)}\leq \frac{2L}{\beta}\|\xi-\hat\xi\|_\beta
\end{equation}
for all $t\geq r$. Combining \eqref{tam1} and \eqref{tam2}, we get
\[
\|\mathcal{T}_\phi \xi-\mathcal{T}_\phi \hat\xi\|_\beta\leq \frac{2L}{\beta}\|\xi(\tau)-\hat\xi(\tau)\|_\beta
\]
for all $\xi,\hat\xi\in (C_\beta([0,\infty);\R^d),\|\cdot\|_\beta)$. Since $\frac{2L}{\beta} <1$, according to the Banach fixed point theorem, there exists a unique fixed point $\xi^*$ of $\mathcal{T}_\phi$ in the space $(C_\beta([0,\infty);\R^d),\|\cdot\|_\beta)$. Put 
\begin{equation*}
\varphi(t,\phi):=\begin{cases}
\phi(t),\quad \forall t\in [-r,0],\\
\xi^*(t),\quad \forall t\in [0,\infty).
\end{cases}
\end{equation*}
It is obvious that $\varphi(\cdot,\phi)$ is the unique global solution of the initial condition problem \eqref{IntroEq}-\eqref{Ini_Cond} on the interval $[-r,\infty)$. From the definition of the space $(C_\beta([0,\infty);\R^d),\|\cdot\|_\beta)$, we can find a constant $C>0$ such that
\[
\|\varphi(t,\phi)\|=\|\xi^*(t)\|\leq C E_\alpha(\beta t^\alpha),\quad \forall t\geq 0.
\]
Due to the asymptotic growth rate of the Mittag-Leffler function $E_\alpha(\beta t^\alpha)$, the solution  $\varphi(\cdot,\phi)$ is exponentially bounded. The proof is complete.
\end{proof}
\begin{remark}\label{Remark_3}
Wang et al.~\cite{Wang} stated a theorem on  exponential boundedness of solutions of delay fractional differential equations. In particular, they  asserted that under the condition \textup{(H1)}, solutions of \eqref{IntroEq} are exponentially bounded for any initial condition $\phi\in C([-r,0];\R^d)$  (see \cite[Theorem 3.3, p.~49]{Wang}). However, their statement is false. For an easy counterexample let us consider the equation 
\begin{equation}\label{Counter_Ex}
\begin{cases}
^{C}D^\alpha_{0+}x(t)=\exp(t^2),\quad t>0,\\
x(t)=x_0\in\R_{\geq 0}\quad\hbox{for}\quad t\in[-r,0],
\end{cases}
\end{equation}
where the fractional order $\alpha\in(0,1)$. In this case, the function $\exp(t^2)$ satisfies the condition $\textup{(H1)}$ above as well as the condition $\textup{(H1)}$ in the statement of \cite[Theorem 3.3, p.~49]{Wang}. By Theorem~\ref{main_R_1}, the equation \eqref{Counter_Ex} has a unique global solution  on $[0,\infty)$, which can be computed explicitly as
\[
\varphi(t,x_0)=x_0+\frac{1}{\Gamma(\alpha)}\int_0^t(t-\tau)^{\alpha-1}\exp(\tau^2)\;d\tau.
\]
It is easily seen that the solution $\varphi(\cdot,x_0)$ is not exponential bounded.
\end{remark}
\section*{Acknowledgement}
This research is funded by the Vietnam National Foundation for
Science and Technology Development (NAFOSTED).

\end{document}